\newtheorem{theorem}{Theorem}[section]
\newtheorem{lemma}[theorem]{Lemma}
\newtheorem{corollary}[theorem]{Corollary}
\newtheorem{example}[theorem]{Example}
\newtheorem{remark}[theorem]{Remark}
\def\erre{{\rm I\!R}}
\def\R{{\rm I\!R}}
\def\meas{\mathop{\rm meas}}
\def\R{{\rm I\!R}}
\def\R{{\rm I\!R}}
\def\di{\displaystyle}
\def\n{\noindent}
\def\phi{\varphi}
\def\div{\mathop{\rm div}}
\def\meas{\mathop{\rm meas}}
\newcommand{\dist}{\texttt{\rmfamily{dist}}}
\title[Multiple solutions for elliptic...]{Multiple solutions for elliptic equations involving a general operator in divergence form}
\author{Giovanni Molica Bisci}
\address[G. Molica Bisci]{University of Reggio Calabria, Department PAU\\
Via Melissari, 24, Reggio Calabria, Italy 89124} \email{gmolica@unirc.it}
\author{Du\v{s}an Repov\v{s}}
\address[D. Repov\v{s}]{Faculty of Education, and Faculty of Mathematics and Physics, University of Ljubljana, Kardeljeva pl. 16, Ljubljana, Slovenia 1000
}
\email{dusan.repovs@guest.arnes.si}
\thanks{{\it 2010 Mathematics Subject Classification.} 35J15,
35J25,
35J62,
35J92.}
\keywords{Three weak solutions, Variational Methods, Divergence type equations.}
\thanks{Typeset by \LaTeX}
\begin{document}

\begin{abstract}
In this paper, exploiting variational methods, the existence of three weak solutions for a class of elliptic equations involving a general operator in divergence form and with Dirichlet boundary condition is investigated. Several special cases are analyzed. In conclusion, for completeness, a concrete example of an application is presented by finding the existence of three nontrivial weak solutions for an uniformly elliptic second-order problem on a bounded Euclidean domain.
\end{abstract}
\maketitle
\section{Introduction}
In this paper we study the existence of multiple weak solutions for the following Dirichlet boundary-value problem:
\begin{equation} \tag{$D_\lambda^{f}$} \label{N}
\left\{
\begin{array}{ll}
-\operatorname{div}(a(x,\nabla u))= \lambda k(x)f(u) & \rm in \quad \Omega
\\ u=0  & \rm on \,
\partial \Omega,\\
\end{array}
\right.
\end{equation}
where $\Omega$ is a bounded domain in ${\R}^{N}$ ($N\geq 2$) with smooth
boundary $\partial \Omega$, $p>1$, $a:\bar\Omega\times {\R}^{N}\to {\R}^N$ is a suitable continuous map of gradient type,
 and $\lambda$ is a positive real parameter. Further, $f:\erre\rightarrow \erre$ and $k:\bar\Omega\rightarrow \R^{+}$ are two continuous functions.\par

\indent We first observe that our multiplicity theorems are related to some known results in the current literature on the subject.
For instance, we refer the reader to the paper of Br\'{e}zis and Oswald \cite{BO} in which an existence and uniqueness result was obtained via the minimization technique
and by using a maximum principle, as well as to the work \cite{lin}, in which Lin proved the existence,
uniqueness and asymptotical properties of the solutions of problem \eqref{N}, when $f$ behaves
like the power $t^q$, for some $q\in ]0,1[$, and sufficiently large $t$.\par

 Further, Dirichlet
problems involving a general operator in divergence form was studied by De N\'apoli and Mariani in \cite{n1}. In this paper the existence of one weak solution were proved by exploiting the standard
mountain pass geometry and requiring, among other assumptions, that the nonlinearity $f$ has a $(p-1)$-superlinear behaviour at infinity. The non-uniform case was successively considered by Duc and
Vu in \cite{d1}, by extending the result of \cite{n1} under the key hypothesis that the map $a$ fulfills the growth condition:
$$
|a(x,\xi)|\leq c_0(h_0(x)+h_1(x)|\xi|^{p-1}),
$$
for every $(x,\xi)\in \Omega\times\erre^N$, where $h_0\in L^{p/(p-1)}(\Omega)$, $h_1\in L^{1}_{loc}(\Omega)$ and $c_0$ is a positive constant.\par
In both papers \cite{n1} and \cite{d1} the function $f$ satisfies the celebrated Ambrosetti-Rabinowitz condition:
\begin{itemize}
\item[(AR)]
\textit{There exist $s_0>0$ and $\theta>p$ such that
$$
0<\theta \int_0^sf(t)\;dt\leq s f(s),\,\,\,\,\,\,\forall\, |s|\geq s_0.
$$}
\end{itemize}

In \cite{CPV}, Colasuonno, Pucci and Varga studied different and very general classes of elliptic operators in divergence form
looking at the existence of multiple weak solutions; see also Remark \ref{onAR1}. Their contributions represent a nice improvement, in several
directions, of the results obtained
 by Krist\'aly,
Lisei and Varga in \cite{k1} in which a uniform Dirichlet problem with parameter is investigated.\par
 Yang, Geng and Yan, in \cite{y1}, proved the existence of three weak solutions for singular
$p$-Laplacian type equations. Further, Papageorgiou, Rocha and Staicu considered a nonsmooth $p$-Laplacian problem in divergence form, obtaining
the existence of at least two nontrivial weak solutions (see \cite{p1}).\par

We also note that Autuori, Pucci and Varga, in \cite{APV}, studied a quasilinear elliptic eigenvalue problem, for every value of the parameter $\lambda$, by using suitable variational techniques (see \cite{PePuVa} for related topics).\par
 A similar approach is adopted by Servadei in \cite{Raffy}.
 More precisely, in this paper, the author consider the following variational inequality involving a self-adjoint, second-order, uniformly elliptic operator $A$ without lower-order terms:
\vskip2pt \n {\it Find $u\in K$ such that
\begin{eqnarray*}\label{norm}
<Av,v-u> &-& \lambda \int_\Omega u(x)(v-u)(x)dx\\
               &\geq & \int_\Omega p(x)f(u(x))(v-u)(x)dx\nonumber.
\end{eqnarray*}
 for all $v\in K$},
\vskip2pt \n
where $\lambda$ is a positive parameter, $p\in L^\infty(\Omega)$ vanishes only on a set of measure zero and positive somewhere on $\Omega$, $f$ is a continuous function satisfying certain superlinear and subcritical growth conditions at zero and at infinity, and $K$ is a suitable subset of $H^1_0(\Omega)$.\par
 If $A$ is the Laplacian and if $f$ has a special form, then this problem is well understood, and the behavior of the solution depends on the value of $\lambda$ compared to the first eigenvalue of the Laplacian. In the cited paper, the author extends these results to a general setting. In particular, if $\lambda_1$ denotes the first eigenvalue of $A$, then there is a nontrivial nonnegative solution if $\lambda<\lambda_1$. The method of proof is based on the Mountain Pass Lemma. Further, the Linking Theorem was applied in the case $\lambda\geq \lambda_1$ (see \cite{Molica} for related applications).\par

Motivated by this large interest in elliptic equations with operators in divergence form, the aim of this paper is to establish, by using
variational arguments (see Theorem \ref{BM}), a precise interval of values of the parameter
$\lambda$ for which problem $(D_{\lambda}^{f})$ admits at least three
weak solutions, without explicit perturbations of the
nonlinear term, even in the higher dimensional setting (see Theorem \ref{Main1}).\par

A direct consequence of Theorem \ref{Main1} is presented in Corollary \ref{main3}. More precisely, in this case the existence of at least two weak nontrivial solutions for problem $(D_{\lambda}^{f})$ is obtained for sufficiently large $\lambda$. A special case of Corollary \ref{main3} is Corollary \ref{main8}, where the classical autonomous setting is stated (see Example \ref{inintroduction}).

The plan of the paper is as follows. Section 2 is devoted to our abstract framework, while Section 3 is dedicated to the main results. A concrete example of an application is then presented (see Example \ref{esempio}).
 Finally, we cite a recent monograph by Krist\'aly, R\u adulescu and Varga \cite{k2} as a general reference on variational methods adopted here.

\section{Abstract Framework}

Let $\Omega$ be a bounded domain in ${\erre}^N$  with smooth
boundary $\partial \Omega$, $p>1$ and denote by:

\begin{itemize}
\item[$\circ$] $X$ the Sobolev space $W_{0}^{1,p}(\Omega)$ endowed by the norm
$$
\|u\|:= \Big(\int_{\Omega} |\nabla u(x)|^p dx\Big)^{1/p};
$$
\item[$\circ$] $X^*$ the topological dual of $X$;
\item[$\circ$] $\langle\cdot,\cdot\rangle$ the duality brackets for the pair $(X^{\ast},X)$.
\end{itemize}
In addition, as customary, the symbol
$$
p^*:=\left\{
\begin{array}{ll}
\displaystyle{pN}/{(N-p)} & {\rm if} \quad 1<p<N
\\ \infty  & {\rm if} \quad
p\geq N,\\
\end{array}
\right.
$$
denotes the critical Sobolev exponent of $p$.
Fixing
$q\in[1,p^*]$, by the Sobolev embedding theorem, there
exists a positive constant $c_q$ such that
\begin{equation}\label{Poincare2}
\|u\|_{L^q(\Omega)}\leq c_q \|u\|\, ,\quad\, u\in X,
\end{equation}
\noindent and, in particular, the embedding $X\hookrightarrow L^q(\Omega)$ is compact for every $q\in [1,p^*[$. Moreover, if $1<p<N$, the best constant $c_{p^*}$ is given by
\begin{equation}\label{TA}
c_{p^*}=\frac{1}{N\sqrt{\pi}}\left(\frac{N!\Gamma\left(\di\frac{N}{2}\right)}{2\Gamma\left(\di\frac{N}{p}\right)\Gamma\left(\di N+1-\frac{N}{p}\right)}\right)^{1/N}\eta^{1-1/p},
\end{equation}
where
$$
\eta:=\displaystyle\frac{N(p-1)}{N-p},
$$
\noindent see, for instance, \cite{TA}. If $p>N$, let
\[
\sup\left\{\frac{\max_{x\in\overline\Omega}|u(x)|}{\|u\|}: u\in
W^{1,p}_0(\Omega), u\neq 0 \right\}<+\infty.
\]
\noindent It is well-known
(\cite[formula (6b)]{TA}) that by putting
\begin{equation} \label{dT}
m:= \frac{N^{-\frac{1}{p}}}{\sqrt{\pi}}\left[\Gamma\left(1+\frac{N}{2}\right)\right]^{\frac{1}{N}}\left(\frac{p-1}{p-N}\right)^{1-\frac{1}{p}}(\meas(\Omega))^{\frac{1}{N}-\frac{1}{p}},
\end{equation}
one has
\begin{equation} \label{dT2}
\|u\|_\infty:=\max_{x\in\bar\Omega}|u(x)|\leq m\|u\|,
\end{equation}
\noindent for every $u\in X$ (equality
occurs when $\Omega$ is a ball). Here $\Gamma$ is the Gamma function defined by
$$
\Gamma(t):=\int_0^{+\infty}z^{t-1}e^{-z}dz,\,\,\,\, \forall\; t>0,
$$
and
$``\meas(\Omega)"$ denotes the usual Lebesgue measure of $\Omega$. Moreover, let
 $$
 \tau:=\sup_{x\in\Omega}\dist(x,\partial\Omega).
 $$
 \indent Simple calculations show that there is $x_0 \in \Omega$ such that
 $B(x_0,\tau) \subseteq \Omega$, where $B(x_0,\tau)$ is the open ball of radius $\tau$ centered at the point $x_0$. We also denote by
$$
\omega_s:=s^N\frac{\pi^{N/2}}{\Gamma \Big(\displaystyle 1+\frac{N}{2}\Big)},
$$
the measure of the $N$-dimensional ball of radius $s>0$.
\noindent At this point, for $\delta>0$, let $u_\delta\in X$ be the following function
\[
u_\delta(x):= \left\{
\begin{array}{ll}
0 & \mbox{ if $x \in \Omega \setminus B(x_0,\tau)$} \\\\
\displaystyle\frac{2\delta}{\tau} \left(\tau- |x-x_0| \right)
& \mbox{ if $x \in B(x_0,\tau) \setminus B(x_0,{\tau}/{2})$} \\\\
\delta & \mbox{ if $x \in B(x_0,{\tau}/{2}),$}
\end{array}
\right.
\]
\noindent that will be useful in the sequel in the proof of our theorems. One has that
\begin{equation*}
\begin{aligned}
\|u_\delta\|^p=\int_\Omega|\nabla u_\delta(x)|^p \;dx= \frac{2^p\delta^p\omega_\tau}{\tau^p}\left(1-\frac{1}{2^N}\right).
\end{aligned}
\end{equation*}
\noindent Indeed,
\begin{equation*} \label{embed}
\begin{aligned}
\int_\Omega |\nabla u_\delta(x)|^p \;dx
&=\delta^p\int_{B(x_0,\tau)
\setminus B(x_0,\tau/2)}\frac{2^p}{\tau^p}\;dx\\
& =\frac{2^p\delta^p}{\tau^p}(\meas(B(x_0,\tau))-\meas(B(x_0,{\tau/2})))\\
&=\frac{2^p\delta^p\omega_\tau}{\tau^p}\left(1-\frac{1}{2^N}\right),
\end{aligned}
\end{equation*}
where, from now on, $``\meas(B(x_0,s))"$ for $s>0$ stands for the Lebesgue measure of the open ball $B(x_0,s)$.\par
 As pointed out before, our approach is variational. More precisely, the main tool will be the following abstract critical point theorem for smooth functions that can be derived from \cite[Theorem 3.6]{BONMAR}.

\begin{theorem}\label{BM}{Let $X$ be a reflexive real Banach space and let $\Phi, \Psi :X\to\R$ be two $C^1$-functionals with $\Phi(0_X)=\Psi(0_X)=0$ and such that$:$
\begin{itemize}
	\item[$({\rm a}_1)$] $\Phi$ is coercive and sequentially weakly lower semicontinuous$;$
	\item[$({\rm a}_2)$] $\Psi :X\to \R$ is sequentially weakly upper semicontinuous.
\end{itemize}
For every $\lambda>0$, put $J_{\lambda}:=\Phi - \lambda \Psi$ and assume that the following conditions are satisfied$:$
\begin{itemize}
\item[$({\rm a}_3)$] There exist $r>0$ and $\bar{u}\in X$, with $r<\Phi(\bar{u})$, such that$:$
$$
\displaystyle\frac{\displaystyle\sup_{u\in\Phi^{-1}(]-\infty,r])}\Psi(u)}{r}< \frac{\Psi(\bar{u})}{\Phi(\bar u)};
$$
\item[$({\rm a}_4)$] For each $\lambda\in \Lambda_{r}:=\left]\displaystyle\frac{\Phi(\bar u)}{\Psi(\bar{u})},\displaystyle\frac{r}{\displaystyle\sup_{u\in\Phi^{-1}(]-\infty,r])}\Psi(u)}\right[$ the functional $J_{\lambda}$ is bounded from below and fulfills $($\rm{PS}$)_{\mu}$, with $\mu\in\R$.
\end{itemize}
Then for each $\lambda \in \Lambda_{r}$, the functional $J_{\lambda}$ has at least three distinct critical points in $X$.
}
\end{theorem}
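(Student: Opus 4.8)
The plan is to realize the three critical points as two distinct local minimizers of $J_\lambda$ together with a mountain-pass point lying strictly above them, which is exactly the scheme behind \cite[Theorem 3.6]{BONMAR}. Throughout fix $\lambda\in\Lambda_r$ and write $S:=\sup_{u\in\Phi^{-1}(]-\infty,r])}\Psi(u)$. Observe first that $S\geq\Psi(0_X)=0$, that $(\mathrm{a}_3)$ forces $\Psi(\bar u)>0$ (so $\Lambda_r$ is a nonempty open interval), that $\lambda<r/S$ yields $r-\lambda S>0$, and that $\lambda>\Phi(\bar u)/\Psi(\bar u)$ yields $J_\lambda(\bar u)=\Phi(\bar u)-\lambda\Psi(\bar u)<0$.

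First I would produce a local minimizer inside the sublevel set $K:=\Phi^{-1}(]-\infty,r])$. By $(\mathrm{a}_1)$ and reflexivity of $X$, the set $K$ is bounded and weakly closed, hence weakly sequentially compact; since $\Phi$ is sequentially weakly lower semicontinuous and $-\lambda\Psi$ is too (by $(\mathrm{a}_2)$ and $\lambda>0$), the functional $J_\lambda$ attains its infimum over $K$ at some $u_1\in K$, with $J_\lambda(u_1)\leq J_\lambda(0_X)=0$. The crucial point is that $u_1$ is interior: if $\Phi(u)=r$ then $J_\lambda(u)=r-\lambda\Psi(u)\geq r-\lambda S>0$, so the infimum cannot be attained on $\Phi^{-1}(\{r\})$, whence $\Phi(u_1)<r$. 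As $\Phi$ is continuous, $\Phi^{-1}(]-\infty,r[)$ is open and $u_1$ is a genuine local minimizer of the $C^1$ functional $J_\lambda$, hence a critical point.

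Next I would extract a second critical point in a different region. Consider the open set $O:=\{u\in X: J_\lambda(u)<r-\lambda S\}$; by the displayed bound it does not meet $\Phi^{-1}(\{r\})$, so each of its connected components lies entirely in $\{\Phi<r\}$ or entirely in $\{\Phi>r\}$. Thus the component $C_1$ of $O$ containing $u_1$ lies in $\{\Phi<r\}$, while $\bar u\in\{\Phi>r\}\cap O$ lies in a different component $C_2$. Because $J_\lambda$ is bounded below on $X$ and $\overline{C_2}$ is complete, Ekeland's variational principle produces an asymptotically critical minimizing sequence on $\overline{C_2}$; since its values tend to $\inf_{C_2}J_\lambda\leq J_\lambda(\bar u)<r-\lambda S$, it eventually lies in the open set $C_2$, and the $(\mathrm{PS})_\mu$ condition of $(\mathrm{a}_4)$ at $\mu=\inf_{C_2}J_\lambda$ provides a convergent subsequence whose limit $u_2\in C_2$ is a critical point. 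Since $C_1\cap C_2=\emptyset$ we have $u_2\neq u_1$, so $J_\lambda$ has two distinct local minimizers.

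Finally, having two distinct local minima and the $(\mathrm{PS})$ condition granted by $(\mathrm{a}_4)$, I would invoke the Pucci--Serrin mountain-pass principle: the minimax value $c:=\inf_{\gamma}\max_{t\in[0,1]}J_\lambda(\gamma(t))$ over continuous paths $\gamma$ joining $u_1$ to $u_2$ satisfies $c\geq r-\lambda S>0$, because every such path runs from $\{\Phi<r\}$ to $\{\Phi>r\}$ and so meets $\Phi^{-1}(\{r\})$; as $c>0\geq\max\{J_\lambda(u_1),J_\lambda(u_2)\}$, a third critical point $u_3$ with $J_\lambda(u_3)=c$ appears, automatically distinct from $u_1$ and $u_2$ by the strict separation of critical values. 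I expect the main obstacle to be precisely the production of the \emph{second, distinct} minimizer: one must exploit the barrier $\{\Phi=r\}$, on which $J_\lambda$ exceeds $r-\lambda S$, together with boundedness from below and $(\mathrm{PS})_\mu$, to trap a minimizer in the far component $C_2$ rather than merely re-finding $u_1$; once two genuine local minima are secured, the third point follows by the standard deformation argument under $(\mathrm{PS})$.
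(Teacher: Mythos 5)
Your argument is correct and complete. Note, however, that the paper does not actually prove Theorem \ref{BM}: it records it as a consequence of \cite[Theorem 3.6]{BONMAR}, so there is no in-paper proof to compare against; what you have written is, in substance, the standard mechanism behind that cited result, worked out in full. The key points all check out: $(\mathrm{a}_3)$ together with $\Psi(0_X)=0$ forces $\Psi(\bar u)>0$ and $S\geq 0$ (if $S=0$ one reads $r/S=+\infty$, and the barrier estimate $J_\lambda\geq r-\lambda S=r>0$ on $\Phi^{-1}(\{r\})$ still holds); the first minimizer exists because $K=\Phi^{-1}(]-\infty,r])$ is bounded and sequentially weakly closed, hence sequentially weakly compact by reflexivity, and $J_\lambda$ is sequentially weakly lower semicontinuous there; it is trapped in $\{\Phi<r\}$ exactly as you say. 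Your separation of $O=\{J_\lambda<r-\lambda S\}$ into components contained in $\{\Phi<r\}$ or in $\{\Phi>r\}$ is sound because $\Phi$ maps each connected component onto an interval missing $r$; moreover components of an open subset of a Banach space are open and relatively closed in $O$, which is precisely what guarantees $\overline{C_2}\cap O=C_2$ and hence that your Ekeland sequence eventually lies in the interior of $\overline{C_2}$, where the almost-criticality estimate $\|J'_\lambda(v_n)\|_{X^*}\to 0$ is legitimate before $(\mathrm{PS})_{\mu}$ is invoked. Two cosmetic remarks: the final step needs only the classical mountain-pass theorem, since the minimax level satisfies $c\geq r-\lambda S>0\geq\max\{J_\lambda(u_1),J_\lambda(u_2)\}$ strictly (the degenerate Pucci--Serrin version for two minima at the same critical level is not required), and one should observe that $c<+\infty$ because $J_\lambda$ is continuous on the compact image of any admissible path. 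What your approach buys over the paper's bare citation is a self-contained verification that the hypotheses $(\mathrm{a}_1)$--$(\mathrm{a}_4)$, and nothing more, yield the three critical points.
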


For the sake of completeness, we also recall that the $C^1$-functional $J_\lambda:X\to\R$ satisfies the Palais-Smale condition at level $\mu\in\R$ when
\begin{itemize}
\item[$\textrm{(PS)}_{\mu}$] {\it Every sequence $\{u_n\}\subset X$  such that
$$
J_\lambda(u_n)\to \mu, \qquad{\it and}\qquad \|J'_\lambda(u_n)\|_{X^*}\to0,
$$
as $n\rightarrow \infty$, possesses a convergent subsequence in $X$.}
\end{itemize}

\section{Main results}

 In the sequel, let $p>1$ and let $\Omega\subset \erre^N$ be a bounded Euclidean domain, where $N\geq 2$. Further, let $A:\bar\Omega\times \erre^{N}\to \erre$, and let $A=A(x,\xi)$ be a continuous function in $\bar\Omega\times \erre^{N}$, with continuous gradient $a(x,\xi):=\nabla_\xi A(x,\xi):\bar \Omega\times \erre^N\rightarrow \erre^{N}$, and assume
that the following conditions hold:
\begin{itemize}
\item[$(\alpha_1)$] $A(x,0)=0$ \textit{for all} $x\in\Omega$;

\item[$(\alpha_2)$] $a$ \textit{satisfies the growth condition}
$|a(x,\xi)|\leq c(1+|\xi|^{p-1})$ \textit{for all} $x\in\Omega$, $\xi\in
\erre^{N}$, \textit{for some constant} $c>0$;

\item[$(\alpha_3)$] $A$ is $p$-\textit{uniformly convex, that is}
\[
A\left(x,\frac{\xi+\eta}{2}\right)\leq\frac{1}{2}A(x,\xi)
+\frac{1}{2}A(x,\eta)-k|\xi-\eta|^{p},
\]
\textit{for every }$x\in\bar\Omega,$ $
\xi,\eta\in \erre^{N}$ \textit{and some} $k>0$;

\item[$(\alpha_4)$] $A$ \textit{is} $p$-\textit{subhomogeneous i.e.},
$0\leq a(x,\xi)\cdot\xi\leq p A(x,\xi)$  for all $x\in\bar\Omega$, $\xi\in
\erre^{N}$;

\item[$(\alpha_5)$] $A$ \textit{satisfies}
$\Lambda_1 |\xi|^{p} \leq A(x,\xi)\leq \Lambda_2 |\xi|^{p}$  \textit{for all} $x\in\bar\Omega$, $\xi\in
\erre^{N}$, \textit{where} $\Lambda_1$ \textit{and} $\Lambda_2$ \textit{are positive constants}.
\end{itemize}

\begin{example}\rm{
 Note that by choosing $$
 A(x,\xi):=\displaystyle\frac{|\xi|^p}{p},$$ with $p\geq 2$, we have the usual $p$-Laplacian operator. See also Remark \ref{diriclet} for details on this special case.
}
\end{example}

\begin{remark}\label{commento}\rm
Define
$A^{|\vee|}:\bar\Omega\times\erre\to\erre$ as follows,
\[
A^{|\vee|}(x,t):=\sup_{|\xi|=t}A(x,\xi), \quad \forall\; x\in\bar\Omega.
\]
For every $\varepsilon, b\in (0,1)$ and $x\in\bar\Omega$, let us put $E_{\varepsilon,b}(x)$ to be the set
\begin{align*}
\Bigg\{&(\xi,\eta)\in\erre^{N}\times\erre^{N}:
A\left(x,\frac{\xi-\eta}{2}\right)\geq\frac{1}{2}\max\left\{A(x,\varepsilon\xi),
A(x,\varepsilon\eta)\right\}, \\
&A\left(x,\frac{\xi+\eta}{2}\right)>(1-b)\frac{A(x,\xi)+A(x,\eta)}{2}\Bigg\},
\end{align*}
and
\[
q_{\varepsilon,b}(x):=\sup\left\{\frac{|\xi-\eta|}{2}: (\xi,\eta)\in
E_{\varepsilon,b}(x)\right\}.
\]
It is known that the map $A$ is said to be \textit{uniformly convex} if it satisfies
\[
\lim_{b\to0}\int_{\Omega}A^{|\vee|}(x,q_{\varepsilon,b}(x))dx=0,
\quad \text{for every } \varepsilon\in(0,1).
\]
\par

\noindent As pointed out in \cite[Remark 2.2]{f1}, we just observe that our hypotheses $(\alpha_1)$-$(\alpha_5)$ imply that $A$ is an uniformly convex operator.
\end{remark}

\begin{remark}\label{S+}\rm

Integrating $(\alpha_2)$ we deduce that the map $A$ satisfies
the growth condition:
$$
A(x,\xi)\leq c(|\xi|+|\xi|^{p}),
$$
 for every $x\in\bar\Omega$ and $\xi\in
\erre^{N}$. Indeed, we have
$$
A(x,\xi)=\int_0^1\frac{d}{dt}A(x,t\xi)dt=\int_0^1a(x,t\xi)\cdot \xi dt.
$$
Hence, we deduce that
$$
A(x,\xi)\leq c\int_0^1(1+|\xi|^{p-1}t^{p-1})|\xi|dt\leq c|\xi|+\frac{c}{p}|\xi|^p\leq c(|\xi|+|\xi|^{p}),
$$
for every $x\in\bar\Omega$ and $\xi\in
\erre^{N}$.
Moreover, we also note that by condition $(\alpha_3)$, the functional $\Phi: X\rightarrow \erre$ defined by
 $$
  u\mapsto \int_{\Omega}A(x,\nabla u(x))dx,
 $$
 is a locally uniformly convex operator. Taking into account the above facts, its G\^{a}teaux derivative $\Phi': X\rightarrow X^*$ satisfies the $(S_+)$ condition; see \cite[Proposition 2.1]{n1}:

 \begin{itemize}
\item[$(S_+)$] {\it For every sequence $\{u_n\}\subset X$ such that $u_n\rightharpoonup u$ $($weakly\,$)$ in $X$ and
$$
\limsup_{n\to\infty}\int_{\Omega}a(x,\nabla u_n(x))
 \cdot\nabla(u_n-u)(x)dx\leq0,$$
 it follows that $u_n\rightarrow u$ $($strongly\,$)$ in $X$.}
\end{itemize}

\end{remark}

\indent From now on, we say that a continuous function $f:\erre\rightarrow\erre$ belongs to the class $\mathfrak{F}_q$ if
\begin{equation}\label{S}
|f(t)|\leq a_1+a_2|t|^{q-1},\,\,\,\,\,(\forall\; t\in\erre)
\end{equation}
for some nonnegative constants $a_1,a_2$, where $q\in ]1,
pN/(N-p)[$ if $p<N$ and $1<q<+\infty$ if $p\geq N$.\par
\indent Let $\Psi:X\rightarrow \erre$ be defined
$$
\Psi(u):=\int_{\Omega}k(x)F(u(x))dx,$$
where $k:\bar\Omega\rightarrow \erre$ is a positive and continuous function, and
$$
F(s):=\int_0^{s}f(t)dt,
$$
for every $s\in\erre$.\par

Fixing $\lambda>0$, we say that $u\in X$ is a \textit{weak solution} of problem \eqref{N} if
$$
\int_{\Omega}a(x,\nabla u(x))\cdot\nabla v(x)dx=\lambda \int_\Omega k(x)f(u(x)) v(x) dx,
$$

\noindent for every $v\in X$.\par
 In the sequel we look at the existence of weak solutions of problem \eqref{N} finding critical points for a suitable associated energy functional $J_\lambda$.

\begin{lemma}\label{lemma}
 Let us assume that $f\in\mathfrak{F}_q$. Then the functionals $\Phi$ and $\Psi$ are respectively sequentially weakly lower and upper semicontinuous. Hence in particular,
 for every $\lambda\in\erre^+$, the functional
$J_\lambda:X\to \erre$ given by
$$
J_\lambda:=\Phi-\lambda\Psi,
$$
is sequentially weakly lower semicontinuous.
\end{lemma}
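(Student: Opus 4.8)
The plan is to treat $\Phi$ and $\Psi$ separately, using convexity for the former and the compactness of the Sobolev embedding for the latter, and then to recombine them. First I would show that $\Phi(u):=\int_\Omega A(x,\nabla u(x))\,dx$ is sequentially weakly lower semicontinuous by verifying that it is both convex and strongly continuous on $X$. Convexity of $\Phi$ reduces to convexity of the map $\xi\mapsto A(x,\xi)$: dropping the nonnegative term $k|\xi-\eta|^{p}$ in $(\alpha_3)$ yields the midpoint inequality $A(x,(\xi+\eta)/2)\le \tfrac12 A(x,\xi)+\tfrac12 A(x,\eta)$, and since $A(x,\cdot)$ is continuous, midpoint convexity upgrades to full convexity. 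For strong continuity, I would invoke the growth bound $A(x,\xi)\le c(|\xi|+|\xi|^{p})$ recorded in Remark \ref{S+}, which guarantees that the Nemytskii operator associated with $A$ maps $L^{p}(\Omega;\erre^{N})$ continuously into $L^{1}(\Omega)$; hence $u_n\to u$ in $X$ (that is, $\nabla u_n\to\nabla u$ in $L^{p}$) forces $\Phi(u_n)\to\Phi(u)$. A convex, strongly continuous functional on a Banach space is sequentially weakly lower semicontinuous, since its sublevel sets are convex and strongly closed, hence weakly closed by Mazur's lemma; this establishes the lower semicontinuity of $\Phi$.

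Next I would establish that $\Psi(u):=\int_\Omega k(x)F(u(x))\,dx$ is in fact sequentially weakly continuous, which in particular yields upper semicontinuity. The key input is that $q<p^{*}$ whenever $f\in\mathfrak{F}_q$, so the embedding $X\hookrightarrow L^{q}(\Omega)$ is compact: if $u_n\rightharpoonup u$ weakly in $X$, then $u_n\to u$ strongly in $L^{q}(\Omega)$. The growth condition \eqref{S} gives $|F(s)|\le a_1|s|+\tfrac{a_2}{q}|s|^{q}$, so the Nemytskii operator $u\mapsto F(u)$ is continuous from $L^{q}(\Omega)$ into $L^{1}(\Omega)$; since $k$ is continuous on the compact set $\bar\Omega$ and hence bounded, it follows that $\Psi(u_n)\to\Psi(u)$. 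Therefore $\Psi$ is sequentially weakly continuous, and in particular sequentially weakly upper semicontinuous.

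Finally, combining the two facts, for $\lambda>0$ the functional $J_\lambda=\Phi-\lambda\Psi$ is the sum of the weakly lower semicontinuous $\Phi$ and of $-\lambda\Psi$, the latter being weakly lower semicontinuous because $\Psi$ is weakly upper semicontinuous and $\lambda>0$; hence $J_\lambda$ is sequentially weakly lower semicontinuous, as claimed. I expect the only delicate point to be the justification of the convexity-implies-lower-semicontinuity step for $\Phi$, specifically confirming the Carath\'eodory/measurability hypotheses needed to apply the Nemytskii continuity theorem and Mazur's lemma; the semicontinuity of $\Psi$, by contrast, is handed to us by the strict inequality $q<p^{*}$ and the resulting compact embedding.
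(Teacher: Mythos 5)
Your proposal is correct and follows essentially the same route as the paper: weak lower semicontinuity of $\Phi$ via its convexity (the paper invokes the local uniform convexity coming from $(\alpha_3)$, you derive ordinary convexity from the same hypothesis and add Mazur's lemma), and weak continuity of $\Psi$ via the growth bound from $f\in\mathfrak{F}_q$, the boundedness of $k$, and the compact embedding $X\hookrightarrow L^{q}(\Omega)$. You simply supply more detail than the paper's two-line argument; no gap.
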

\begin{proof}
 The functional $\Phi$ being locally uniformly convex,
is weakly lower semicontionous. On the other hand, since $f\in\mathfrak{F}_q$ and $k\in C^{0}(\bar \Omega)$, we have
$|k(x)f(t)|\leq \|k\|_\infty(a_1+a_2|t|^{q-1})$, for every $(x,t)\in\bar\Omega\times\erre$. Finally, due to the fact that
the embedding $X \hookrightarrow
L^{q}(\Omega)$ is compact, we obtain that $\Psi$ is
sequentially weakly (upper) semicontinuous in the standard way.
\end{proof}
\indent Set
$$
\kappa:=\left(\frac{2^{N-p}}{\omega_\tau(2^N-1)\Lambda_1}\right)^{1/p}\tau,
$$
\noindent and
$$
G_1:=\left(\frac{2^p(2^N-1)\Lambda_2}{\tau^p\displaystyle\min_{x\in\bar\Omega}k(x)}\right)\frac{\|k\|_{\infty}c_1}{\Lambda_1^{1/p}},
$$
as well as
$$
G_2:=\left(\frac{2^p(2^N-1)\Lambda_2}{\tau^p\displaystyle\min_{x\in\bar\Omega}k(x)}\right)\frac{\|k\|_{\infty}c_q^q}{q\Lambda_1^{q/p}}.
$$
\indent Our main result is as follows.
\begin{theorem} \label{Main1}
Let $f\in \mathfrak{F}_q$ and assume that
\begin{itemize}
\item [$(\textrm{h}_0)$]$F(s)\geq 0$ for every $s\in\erre^+$$;$
\item [$(\textrm{h}_1)$]
$
\displaystyle\lim_{|t|\rightarrow \infty}\frac{f(t)}{|t|^{p-1}}=0;
$
\item [$(\textrm{h}_2)$] There are positive constants $\gamma$ and $\delta$, with $\delta>\gamma\kappa$ such that
$$
\frac{F(\delta)}{\delta^p}>a_1\frac{G_1}{\gamma^{p-1}}+a_2G_2\gamma^{q-p}.
$$

\end{itemize}

\noindent Then for each parameter $\lambda$ belonging to
$$
\Lambda_{(\gamma,\delta)}:=\frac{2^p(2^N-1)\Lambda_2}{\tau^p\displaystyle\min_{x\in\bar\Omega}k(x)}\left]\frac{\displaystyle \delta^p}{\displaystyle  F(\delta)},\frac{1}{\Big(a_1\displaystyle\frac{G_1}{\gamma^{p-1}}+a_2G_2\gamma^{q-p}\Big)}\right[,
$$
\noindent problem
\eqref{N} possesses at least three weak solutions.
\end{theorem}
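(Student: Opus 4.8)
The plan is to apply the abstract result Theorem~\ref{BM} to the pair $(\Phi,\Psi)$ introduced above, whose critical points of $J_\lambda=\Phi-\lambda\Psi$ are precisely the weak solutions of \eqref{N}. First I would record the structural facts: $\Phi(0_X)=\Psi(0_X)=0$ (from $(\alpha_1)$ and $F(0)=0$), both functionals are of class $C^1$ under $(\alpha_2)$ and $f\in\mathfrak{F}_q$, and conditions $(\mathrm{a}_1)$, $(\mathrm{a}_2)$ follow at once from Lemma~\ref{lemma}, the coercivity of $\Phi$ being a consequence of the lower bound $\Phi(u)\ge\Lambda_1\|u\|^p$ in $(\alpha_5)$ together with $p>1$.

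The heart of the argument is the verification of $(\mathrm{a}_3)$ with the test function $\bar u:=u_\delta$ and the threshold $r:=\gamma^p$. Using $(\alpha_5)$ and the computed norm of $u_\delta$, one gets $\Phi(u_\delta)\ge\Lambda_1\|u_\delta\|^p=(\delta/\kappa)^p$, so the assumption $\delta>\gamma\kappa$ yields exactly $r<\Phi(u_\delta)$. For the numerator of $(\mathrm{a}_3)$, if $\Phi(u)\le r$ then $\|u\|\le\gamma/\Lambda_1^{1/p}$ by $(\alpha_5)$; combining the elementary bound $|F(s)|\le a_1|s|+(a_2/q)|s|^q$ coming from $f\in\mathfrak{F}_q$ with the Sobolev inequalities \eqref{Poincare2} gives
$$
\frac{\sup_{u\in\Phi^{-1}(]-\infty,r])}\Psi(u)}{r}\le\frac{1}{\sigma}\Big(a_1\frac{G_1}{\gamma^{p-1}}+a_2 G_2\gamma^{q-p}\Big),
$$
where $\sigma:=\frac{2^p(2^N-1)\Lambda_2}{\tau^p\min_{x\in\bar\Omega}k(x)}$. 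On the other hand, since $u_\delta\ge0$ and $u_\delta\equiv\delta$ on $B(x_0,\tau/2)$, hypothesis $(\mathrm{h}_0)$ gives $\Psi(u_\delta)\ge\min_{x}k(x)F(\delta)\,\omega_\tau/2^N$, and with $\Phi(u_\delta)\le\Lambda_2\|u_\delta\|^p$ this produces $\Psi(u_\delta)/\Phi(u_\delta)\ge\sigma^{-1}F(\delta)/\delta^p$. Hypothesis $(\mathrm{h}_2)$ then gives strict inequality in $(\mathrm{a}_3)$, and the same two estimates show $\Lambda_{(\gamma,\delta)}\subseteq\Lambda_r$, so it suffices to verify $(\mathrm{a}_4)$ on $\Lambda_r$.

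For $(\mathrm{a}_4)$, I would first exploit the sublinear-at-infinity condition $(\mathrm{h}_1)$: for every $\varepsilon>0$ there is $C_\varepsilon>0$ with $|F(s)|\le\frac{\varepsilon}{p}|s|^p+C_\varepsilon|s|$, whence $J_\lambda(u)\ge(\Lambda_1-\lambda\|k\|_\infty\varepsilon c_p^p/p)\|u\|^p-\lambda\|k\|_\infty C_\varepsilon c_1\|u\|$. Choosing $\varepsilon$ small (possible since $\lambda$ ranges in a bounded interval) makes the leading coefficient positive, giving both boundedness from below and coercivity of $J_\lambda$. The coercivity forces any $(\mathrm{PS})_\mu$ sequence to be bounded, hence weakly convergent along a subsequence $u_n\rightharpoonup u$; testing $J'_\lambda(u_n)\to0$ against $u_n-u$ and using the compact embedding $X\hookrightarrow L^q(\Omega)$ to kill the term $\lambda\int_\Omega k(x)f(u_n)(u_n-u)\,dx$, one obtains $\limsup_n\langle\Phi'(u_n),u_n-u\rangle\le0$; the $(S_+)$ property recorded in Remark~\ref{S+} then upgrades weak to strong convergence, so $(\mathrm{PS})_\mu$ holds. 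Theorem~\ref{BM} now yields three distinct critical points of $J_\lambda$ for every $\lambda\in\Lambda_r\supseteq\Lambda_{(\gamma,\delta)}$, that is, three weak solutions of \eqref{N}.

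The main obstacle I anticipate is not any single deep step but the bookkeeping in $(\mathrm{a}_3)$: one must choose $r=\gamma^p$ so that the powers $\gamma^{1-p}$ and $\gamma^{q-p}$ align precisely with $G_1/\gamma^{p-1}$ and $G_2\gamma^{q-p}$, and simultaneously track how $\kappa$, $\sigma$, $G_1$, $G_2$ absorb the geometric constants $\omega_\tau$, $\tau$, $\Lambda_1$, $\Lambda_2$ so that $(\mathrm{h}_2)$ converts exactly into the strict inequality $(\mathrm{a}_3)$ and into the inclusion $\Lambda_{(\gamma,\delta)}\subseteq\Lambda_r$. Getting these constants to match is where all the care is required.
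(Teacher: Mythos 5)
Your proposal is correct and follows essentially the same route as the paper's proof: the same choice $r=\gamma^p$ and $\bar u=u_\delta$, the same Sobolev estimates converting $(\textrm{h}_2)$ into $({\rm a}_3)$ and into the inclusion $\Lambda_{(\gamma,\delta)}\subseteq\Lambda_r$, and the same coercivity-plus-$(S_+)$ argument for $({\rm a}_4)$. The only cosmetic difference is that the paper fixes $\lambda$ first and picks the smallness threshold $\delta_\lambda$ from $(\textrm{h}_1)$ depending on $\lambda$, rather than invoking boundedness of the parameter interval, but this is the same estimate.
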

\begin{proof}
Our aim is to apply Theorem \ref{BM}. Let $X:=W^{1,p}_0(\Omega)$ and consider the
functionals $\Phi,\Psi:X\to\R$ defined before. Clearly, $\Phi:X\to\R$ is coercive since, by condition $(\alpha_5)$, it follows that
$$
\Phi(u)\geq \Lambda_1\|u\|^{p}\rightarrow +\infty,
$$
when $\|u\|\rightarrow \infty$. Moreover, $\Phi$ is a continuously G\^{a}teaux differentiable and sequentially weakly lower semicontinuous functional (by Lemma \ref{lemma}). On the other
hand, $\Psi$ is
well-defined, continuously G\^{a}teaux differentiable and, again by Lemma \ref{lemma}, a weakly upper semicontinuous functional. More precisely, one has
$$
\Phi'(u)(v)=\int_{\Omega}a(x,\nabla u(x))\cdot\nabla
v(x)dx,$$
$$
\Psi'(u)(v)=\int_\Omega k(x)f(u(x)) v(x) dx,
$$
for every $u,v\in X$. Hence conditions $({\rm a}_1)$ and $({\rm a}_2)$ of Theorem \ref{BM} are satisfied. Now, fix $\lambda>0$. A critical point of the functional $J_\lambda:=\Phi-\lambda\Psi$ is a
function $u\in X$ such that
$$
\Phi'(u)(v)-\lambda\Psi'(u)(v)=0,
$$
for every $v\in X$. Hence, the critical points of the functional $J_\lambda$ are weak solutions of problem \eqref{N}. Moreover, $\Phi (0_X)=\Psi (0_X)=0.$
 \noindent Since $f\in \mathfrak{F}_q$, one has that
\begin{equation}\label{inequality}
F(s)\leq a_1|s|+a_2\frac{|s|^{q}}{q},
\end{equation}
for every $s\in \erre$.\par
\noindent Let $r\in ]0,+\infty[$ and consider the function
$$
\chi(r):=\frac{\displaystyle\sup_{u\in\Phi^{-1}(]-\infty,r])}\Psi(u)}{r}.
$$
\noindent Taking into account (\ref{inequality}), it follows that
$$
\Psi(u)=\int_\Omega k(x)F(u(x))dx\leq \|k\|_\infty\left(a_1\|u\|_{L^1(\Omega)}+\frac{a_2}{q}\|u\|_{L^q(\Omega)}^q\right).
$$
Then let $u\in X$ be such that $\Phi(u)\leq r$, that is
$$
\int_{\Omega}A(x,\nabla
u(x))dx\leq r.
$$
Hence the above relation together with condition $(\alpha_5)$ implies that
$$
\|u\|\leq \frac{r^{1/p}}{\Lambda^{1/p}_1}.
$$
In other words, one has the following algebraic inclusion
$$
\left\{u\in X:\int_{\Omega}A(x,\nabla
u(x))dx\leq r\right\}\subseteq \left\{u\in X:\|u\|\leq \frac{r^{1/p}}{\Lambda^{1/p}_1}\right\}.
$$
Consequently, by the Sobolev inequalities (\ref{Poincare2}), one has
$$
\Psi(u)\leq \|k\|_\infty\left(\frac{c_1a_1r^{1/p}}{\Lambda^{1/p}_1}+\frac{c_q^qa_2r^{q/p}}{q \Lambda^{q/p}_1}\right),
$$
for every $u\in \Phi^{-1}(]-\infty,r])$.
Hence,
$$
\sup_{u\in\Phi^{-1}(]-\infty,r])}\Psi(u)\leq \|k\|_\infty \left(\frac{c_1a_1r^{1/p}}{\Lambda^{1/p}_1}+\frac{c_q^qa_2r^{q/p}}{q \Lambda^{q/p}_1}\right).
$$
\noindent By the above inequality, the following relation holds
\begin{equation}\label{n}
\chi(r)\leq \|k\|_\infty L(r),
\end{equation}
\noindent for every $r>0$, where we set
$$
L(r):=\frac{c_1a_1r^{1/p-1}}{\Lambda^{1/p}_1}+\frac{c_q^qa_2r^{q/p-1}}{q\Lambda^{q/p}_1}.
$$
\noindent Next, let $u_\delta$ be the function defined in Section 2. Clearly $u_\delta\in X$ and, since $(\alpha_5)$ holds, we have
\begin{eqnarray}\label{norm}
\Phi(u_\delta) &=&\int_{\Omega}A(x,\nabla
u_\delta(x))dx\\
               &\leq & \Lambda_2\left(\frac{2^p\delta^p\omega_\tau}{\tau^p}\left(1-\frac{1}{2^N}\right)\right)\nonumber.
\end{eqnarray}
Taking into account that $\delta>\gamma\kappa$, by a direct computation,
one has 
$\gamma^p<\Phi(u_\delta)$. Moreover from $(\textrm{h}_0)$ and taking into account that the map $k$ is positive and continuous in $\bar\Omega$, we easily derive
\begin{eqnarray}\label{l}
\int_\Omega k(x)F(u_\delta(x))\;dx\geq F(\delta)\frac{\pi^{N/2}}{\Gamma(1+N/2)}\frac{\tau^N}{2^N}\min_{x\in\bar\Omega} k(x).
\end{eqnarray}
\noindent Hence, by (\ref{norm}) and (\ref{l}), one has
\begin{equation}\label{condition}
\frac{\Psi(u_\delta)}{\Phi(u_\delta)}\geq \left(\frac{\displaystyle \tau^p\min_{x\in\bar\Omega} k(x)}{2^p(2^{N}-1)\Lambda_2}\right)\frac{F(\delta)}{\delta^p}.
\end{equation}
In view of $(\textrm{h}_2)$ and taking into account (\ref{n}) and (\ref{condition}), we get
\begin{eqnarray}
\chi(\gamma^p)=\frac{\displaystyle\sup_{u\in\Phi^{-1}(]-\infty,\gamma^p])}\Psi(u)}{\gamma^p}
&\leq & \|k\|_\infty L(\gamma^p)\nonumber \\
& =&  \frac{\displaystyle \tau^p\min_{x\in\bar\Omega} k(x)}{2^p(2^{N}-1)\Lambda_2}\Big(a_1\displaystyle\frac{G_1}{\gamma^{p-1}}+a_2G_2\gamma^{q-p}\Big)\nonumber\\
               & <& \left(\frac{\displaystyle \tau^p\min_{x\in\bar\Omega} k(x)}{2^p(2^{N}-1)\Lambda_2}\right)\frac{F(\delta)}{\delta^p}\nonumber \\
               & \leq& \frac{\Psi(u_\delta)}{\Phi(u_\delta)}.
               \nonumber
\end{eqnarray}
\noindent Therefore, assumption $({\rm a}_3)$ of Theorem \ref{BM} is satisfied.\par
\noindent Let us fix $\lambda>0$. By condition $(\textrm{h}_1)$, there exists $\delta_\lambda$ such that
$$
|f(t)|\leq \frac{\Lambda_1}{c_p^p(1+\lambda)\|k\|_\infty}|t|^{p-1},
$$
\noindent for every $|t|\geq \delta_\lambda$. By integration we get
$$
|F(s)|\leq \frac{\Lambda_1}{c_p^p(1+\lambda)\|k\|_\infty}|s|^{p}+\max_{|t|\leq \delta_\lambda}|f(t)||s|,
$$
\noindent for every $s\in\erre$.\par
\noindent Thus
\begin{eqnarray}\label{e3.2a}
J_\lambda(u) &\geq& \Phi(u)-\lambda |\Psi(u)|\nonumber\\
               &\geq& \Lambda_1\|u\|^p-\frac{\Lambda_1\lambda}{(1+\lambda)}\|u\|^p-c_1\|k\|_{\infty}\lambda \max_{|t|\leq \delta_\lambda}|f(t)|\|u\|.\nonumber
\end{eqnarray}
\noindent Then the functional $J_\lambda$ is bounded from below and, since $p>1$,
$J_\lambda(u)\rightarrow +\infty$ whenever $\|u\|\rightarrow +\infty$. Hence $J_\lambda$ is coercive. Now, let us prove that $J_\lambda$ satisfies the condition $\textrm{(PS)}_{\mu}$ for $\mu\in\erre$. For our goal, let $\{u_{n}\}\subset X$ be a Palais-Smale
sequence, i.e.
$$
J_\lambda(u_n)\to \mu, \qquad{\rm and}\qquad \|J'_\lambda(u_n)\|_{X^*}\to0.
$$
Taking into account the coercivity of $J_\lambda$, the sequence $\{u_n\}$ is necessarily bounded in $X$. Since $X$ is reflexive, we may extract a subsequence,
which for simplicity we call again $\{u_{n}\}$, such that
$u_{n}\rightharpoonup u$ in $X$. We will prove that $\{u_{n}\}$ strongly converges to $u\in X$.
Exploiting the derivative $J_\lambda'(u_n)(u_n-u)$, we obtain
\begin{eqnarray*}
\int_{\Omega}a(x,\nabla
u_n(x))\cdot\nabla(u_n-u)(x)dx &=& \langle J'_\lambda(u_n),u_n-u\rangle\\
               &+ & \lambda\int_{\Omega}k(x)f(u_n(x))(u_n-u)(x)dx.\nonumber
\end{eqnarray*}
Since $\|J'_\lambda(u_n)\|_{X^*}\to0$ and the sequence $\{u_n-u\}$ is bounded in
$X$, taking into account that $|\langle
J'_\lambda(u_n),u_n-u\rangle|\leq\|J'_\lambda(u_n)\|_{X^*}\|u_n-u\|$, one has
\[
\langle J'_\lambda(u_n),u_n-u\rangle\to0.
\]
 Further, by the asymptotic condition $(\textrm{h}_1)$, there exists a real positive constant $c$ such that $|f(t)|\leq c(1+|t|^{p-1})$, for every $t\in\erre$. Then
\begin{align*}
&\int_{\Omega}k(x)|f(u_n(x))||u_n(x)-u(x)|dx \\
&\leq \sigma\int_{\Omega}|u_n(x)-u(x)|dx
+ \sigma\int_{\Omega}|u_n(x)|^{p-1}|u_n(x)-u(x)|dx \\
&\leq \sigma((\meas(\Omega))^{1/p'}+\|u_n\|_{L^{p}(\Omega)}^{p-1}) \|u_n-u\|_{L^p(\Omega)},
\end{align*}
where $\sigma:=c\|k\|_\infty$ and $p'$ is the conjugate exponent of $p$.\par
 Now, the embedding
$W^{1,p}(\Omega)\hookrightarrow L^p(\Omega)$ is compact, hence $u_n\to u$ strongly in $L^p(\Omega)$.
So we obtain
\[
\int_{\Omega}k(x)|f(u_n(x))||u_n(x)-u(x)|dx\to0.
\]
 We may conclude
\[ 
\limsup_{n\to\infty}\langle a(x,u_{n}),u_{n}-u\rangle
=0,
\]
where $\langle a(x,u_{n}),u_{n}-u\rangle$ denotes
$$\displaystyle\int_{\Omega}a(x,\nabla u_n(x))\cdot\nabla(u_n-u)(x)dx.$$
\noindent But as observed in Remark \ref{S+}, the operator $\Phi'$ has the $(S_+)$ property. So, in conclusion, $u_{n}\to u$ strongly in $X$. Hence, $J_\lambda$ is bounded from below and fulfills $($\rm{PS}$)_{\mu}$ (with $\mu\in\R$), for every positive parameter, in particular, for every $$\lambda\in \Lambda_{(\gamma,\delta)}\subseteq \left]\displaystyle\frac{\Phi(u_\delta)}{\Psi(u_\delta)},\displaystyle\frac{\gamma^p}{\displaystyle\sup_{u\in \Phi^{-1}(]-\infty,\gamma^p])}\Psi(u)}\right[.$$ Then also condition $({\rm a}_4)$ holds. Hence all the assumptions of Theorem \ref{BM} are satisfied. Consequently, for each $\lambda \in \Lambda_{(\gamma,\delta)}$, the functional $J_\lambda$ has at least three distinct critical points that are weak solutions of the problem \eqref{N}.
\end{proof}

\begin{remark}\label{diriclet0}\rm A careful analysis of the proof of Theorem \ref{Main1} shows that condition $(\textrm{h}_0)$ can be replaced by a more general sign condition on the potential:
\begin{itemize}
\item [$(\textrm{h}_0')$]$F(s)\geq 0$, \textit{for every} $s\in ]0,\delta[$$,$
\end{itemize}
where $\delta$ is the constant that appears in hypothesis $(\textrm{h}_2)$.
\end{remark}

\begin{remark}\label{diriclet}\rm
If
$$
a(x,\nabla u) := |\nabla u |^{p-2}\nabla u ,\quad p\geq 2,
$$
for every $x\in\bar\Omega$, the geometrical constants in the main result
assume simpler expressions
$$
{G}_1:=\left(\frac{2^p(2^N-1)}{\tau^p\displaystyle\min_{x\in\bar\Omega}k(x)}\right)\frac{\|k\|_{\infty}c_1}{p^{\frac{p-1}{p}}},\,\,\,\,\,\,\,\,
{G}_2:=\left(\frac{2^p(2^N-1)}{\tau^p\displaystyle\min_{x\in\bar\Omega}k(x)}\right)\frac{\|k\|_{\infty}c_q^{q}}{qp^{\frac{p-q}{p}}}.
$$
In this setting, under the same hypotheses of Theorem \ref{Main1}, for each parameter $\lambda$ belonging to
$$
\widetilde{\Lambda}_{(\gamma,\delta)}:=\frac{2^p(2^N-1)}{p\tau^p\displaystyle\min_{x\in\bar\Omega}k(x)}\left]\frac{\displaystyle \delta^p}{\displaystyle  F(\delta)},\frac{1}{\Big(a_1\displaystyle\frac{G_1}{\gamma^{p-1}}+a_2G_2\gamma^{q-p}\Big)}\right[,
$$
\noindent the following problem
\begin{equation} \tag{$\widehat{D}_\lambda^{f}$} \label{N45}
\left\{
\begin{array}{ll}
-\Delta_pu= \lambda k(x)f(u) & \rm in \quad \Omega
\\ u=0  & \rm on \,
\partial \Omega,\\
\end{array}
\right.
\end{equation}
where $\Delta_p:=\div(|\nabla u|^{p-2}\nabla u)$
denotes the $p$-Laplacian operator, possesses at least three weak solutions. The case $p=2$ has been studied in \cite[Theorem 3.1]{BonannoMolica} by using a similar variational approach. See also the work \cite{DM} for the Neumann setting.
\end{remark}

\begin{remark}\label{B}
\rm{
An explicit upper bound for the constants $c_q$ in Theorem \ref{Main1} can be easily obtained. Indeed, let $1<p<N$ and fix $q\in [1,p^{*}[$. By formula \eqref{TA} one has
$$
 c_q\leq \frac{\meas(\Omega)^{\frac{p^*-q}{p^*q}}}{N\sqrt{\pi}}\left(\frac{N!\Gamma\left(\di\frac{N}{2}\right)}{2\Gamma\left(\di\frac{N}{p}\right)\Gamma\left(\di N+1-\frac{N}{p}\right)}\right)^{1/N}\eta^{1-1/p},
$$
where
$$
\eta:=\displaystyle\frac{N(p-1)}{N-p}.
$$
On the other hand, if $p>N$, due to inequality \eqref{dT2}, we also have
$$
c_q\leq \frac{N^{-\frac{1}{p}}}{\sqrt{\pi}}\left[\Gamma\left(1+\frac{N}{2}\right)\right]^{\frac{1}{N}}\left(\frac{p-1}{p-N}\right)^{1-\frac{1}{p}}(\meas(\Omega))^{\frac{1}{N}+\frac{p-q}{qp}}.
$$
}
\end{remark}

We also have the following multiplicity result that can be proved by Theorem \ref{Main1}.

\begin{corollary}\label{main3}
Assume that $f:\erre\rightarrow\erre$ is a continuous and nonnegative function such that
$$
F(s)\leq a_2 s^{q-p},\,\,\,\,\,(\forall\;s\in \erre^+)
$$
for some $a_2>0$ and $p<q<p^*$. In addition,
suppose that condition $(\rm{h}_1)$ holds. Then for every
$$\displaystyle\lambda>\frac{2^p(2^N-1)\Lambda_2}{\tau^p\displaystyle\min_{x\in\bar\Omega}k(x)}\inf_{\delta>0}\frac{\displaystyle \delta^p}{\displaystyle  F(\delta)},$$
problem \eqref{N} has at least
two distinct, nontrivial weak solutions.
\end{corollary}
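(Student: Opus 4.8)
The plan is to deduce the statement directly from Theorem \ref{Main1}, applied in the special case $a_1=0$, exploiting the strict inequality $q>p$ to let the admissible parameter interval extend all the way to $+\infty$. First I would record that the growth and nonnegativity hypotheses on $f$ place it in the class $\mathfrak{F}_q$ with $a_1=0$ (and the given $a_2$), and that $f\geq 0$ forces $F(s)\geq 0$ for every $s\in\erre^+$; hence $(\textrm{h}_0)$ holds automatically, $(\textrm{h}_1)$ is assumed, and it only remains to secure $(\textrm{h}_2)$ and to identify the resulting range of $\lambda$.

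With $a_1=0$, both condition $(\textrm{h}_2)$ and the interval $\Lambda_{(\gamma,\delta)}$ of Theorem \ref{Main1} simplify: $(\textrm{h}_2)$ becomes $F(\delta)/\delta^p > a_2 G_2\gamma^{q-p}$, and
$$
\Lambda_{(\gamma,\delta)}=\frac{2^p(2^N-1)\Lambda_2}{\tau^p\min_{x\in\bar\Omega}k(x)}\left]\frac{\delta^p}{F(\delta)},\frac{1}{a_2 G_2\gamma^{q-p}}\right[.
$$
The decisive feature is that $q-p>0$, so $\gamma^{q-p}\to 0^+$ as $\gamma\to 0^+$; consequently the right endpoint of $\Lambda_{(\gamma,\delta)}$ tends to $+\infty$, while the left endpoint depends only on $\delta$. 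I would then argue as follows. Writing $C:=\frac{2^p(2^N-1)\Lambda_2}{\tau^p\min_{x\in\bar\Omega}k(x)}$, fix any $\lambda>C\inf_{\delta>0}\delta^p/F(\delta)$. By definition of the infimum there is $\delta_0>0$ with $F(\delta_0)>0$ and $C\,\delta_0^p/F(\delta_0)<\lambda$, which is the left endpoint of $\Lambda_{(\gamma,\delta_0)}$ for every $\gamma$. Since the right endpoint $C/(a_2 G_2\gamma^{q-p})\to+\infty$ and the constraint $\delta_0>\gamma\kappa$ holds once $\gamma$ is small, one can pick $\gamma>0$ small enough that simultaneously $\delta_0>\gamma\kappa$, condition $(\textrm{h}_2)$ is satisfied, and $\lambda\in\Lambda_{(\gamma,\delta_0)}$. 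Thus all hypotheses of Theorem \ref{Main1} hold for this $\lambda$, producing three distinct weak solutions of \eqref{N}.

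To pass from three solutions to two nontrivial ones, I would simply observe that the three weak solutions furnished by Theorem \ref{Main1} are pairwise distinct elements of $X$, so at most one of them can coincide with the zero function; the remaining (at least) two are therefore nontrivial, which is the claim.

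The step I expect to require genuine care is the parameter bookkeeping in the middle paragraph: one must check that, for the prescribed $\lambda$, the three requirements $\delta_0>\gamma\kappa$, the compatibility inequality $(\textrm{h}_2)$, and the membership $\lambda\in\Lambda_{(\gamma,\delta_0)}$ can all be met by a single sufficiently small choice of $\gamma$. This rests entirely on $q>p$, which makes $\gamma^{q-p}\to 0$ and pushes the right endpoint to $+\infty$; were $q\leq p$ the endpoint would not blow up and the conclusion ``for every large $\lambda$'' would fail. By comparison, the reduction to $a_1=0$ and the distinctness argument for nontriviality are routine.
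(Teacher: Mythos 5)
Your proposal is correct and follows essentially the same route as the paper's own proof: fix $\bar\delta$ realizing the infimum within $\lambda$, then use $q>p$ to choose $\gamma$ small enough that $\delta>\gamma\kappa$, $(\textrm{h}_2)$ holds (with $a_1=0$), and the right endpoint $C/(a_2G_2\gamma^{q-p})$ exceeds $\lambda$, after which Theorem \ref{Main1} yields three solutions of which at most one is trivial. The only delicate point --- reading the hypotheses as placing $f$ in $\mathfrak{F}_q$ with $a_1=0$ so that the $a_1G_1/\gamma^{p-1}$ term (which would blow up as $\gamma\to0^+$) disappears --- is exactly the same implicit step the paper itself takes.
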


\begin{proof}
\noindent Since $f$ is a nonnegative function, hypothesis $(\rm{h}_0)$ clearly holds. Now, fixing $\bar\lambda$ as in the conclusion, there exists $\bar\delta>0$ such that
$$
\bar\lambda>\left(\frac{2^p(2^N-1)\Lambda_2}{\tau^p\displaystyle\min_{x\in\bar\Omega}k(x)}\right)\frac{\displaystyle \bar\delta^p}{\displaystyle  F(\bar\delta)}.
$$
Moreover, picking a positive constant $\bar\gamma$, with
$$
\bar\gamma<\min\Big\{
\frac{\bar{\delta}}{\kappa},\Big(\frac{2^p(2^N-1)\Lambda_2}{\lambda a_2\tau^pG_2\displaystyle\min_{x\in\bar\Omega}k(x)}\Big)^{\frac{1}{q-p}}\Big\},
$$
condition $(\rm{h}_2)$ is verified and, since
$$
\bar\lambda\in\frac{2^p(2^N-1)\Lambda_2}{\tau^p\displaystyle\min_{x\in\bar\Omega}k(x)}\left]\frac{\displaystyle \bar\delta^p}{\displaystyle  F(\bar\delta)},\frac{1}{a_2G_2\bar\gamma^{q-p}}\right[,
$$
\noindent Theorem \ref{Main1} ensures that problem $(D_{\bar\lambda}^{f})$ admits at least two nontrivial weak solutions.
\end{proof}
The next is a special case of the above result. 
\begin{corollary}\label{main8}
Let $2\leq p<N$ and assume that $f:\erre\rightarrow\erre$ is a continuous and nonnegative function that is $(p-1)$-sublinear at infinity, i.e.
$$
\displaystyle\lim_{|t|\rightarrow \infty}\frac{f(t)}{|t|^{p-1}}=0.
$$
Suppose that
$$
F(s)\leq a_2 s^{q-p},\,\,\,\,\,(\forall\;s\in \erre^+)
$$
for some $a_2>0$ and $p<q<pN/(N-p)$.\par
\noindent Then for every
$$\displaystyle\lambda>\frac{2^p(2^N-1)}{p\tau^p}\inf_{\delta>0}\frac{\displaystyle \delta^p}{\displaystyle \int_0^\delta f(t)dt},$$
the following Dirichlet problem
\begin{equation} \tag{$\widehat{D}_\lambda^{f}$} \label{N45}
\left\{
\begin{array}{ll}
-\Delta_pu= \lambda f(u) & \rm in \quad \Omega
\\ u=0  & \rm on \,
\partial \Omega,\\
\end{array}
\right.
\end{equation}
\noindent has at least
two distinct, nontrivial weak solutions in $W^{1,p}_0(\Omega)$.
\end{corollary}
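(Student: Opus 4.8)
The plan is to read Corollary \ref{main8} as nothing more than the autonomous $p$-Laplacian instance of Corollary \ref{main3}: the whole argument reduces to checking that the choice $A(x,\xi):=|\xi|^p/p$ fits the abstract structure $(\alpha_1)$--$(\alpha_5)$ and then matching the geometric constants. Accordingly, I would first set
$$
A(x,\xi):=\frac{|\xi|^p}{p},\qquad a(x,\xi):=\nabla_\xi A(x,\xi)=|\xi|^{p-2}\xi,
$$
so that $-\div(a(x,\nabla u))=-\Delta_p u$ and problem \eqref{N} reduces to \eqref{N45}. I would also take $k\equiv 1$, whence $\|k\|_\infty=\min_{x\in\bar\Omega}k(x)=1$.

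Next I would verify the five structural hypotheses for this $A$. Conditions $(\alpha_1)$, $(\alpha_2)$ (with $c=1$) and $(\alpha_4)$ are immediate; indeed $a(x,\xi)\cdot\xi=|\xi|^p=p\,A(x,\xi)$, so $(\alpha_4)$ holds with equality, while $(\alpha_5)$ holds with $\Lambda_1=\Lambda_2=1/p$. The only point requiring genuine work is the $p$-uniform convexity $(\alpha_3)$, namely the existence of $k>0$ such that
$$
\Big|\frac{\xi+\eta}{2}\Big|^p\leq \frac12|\xi|^p+\frac12|\eta|^p-k|\xi-\eta|^p,\qquad \xi,\eta\in\erre^N,
$$
from which $(\alpha_3)$ for $A$ follows after dividing by $p$. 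This is precisely the classical Clarkson-type inequality, valid for $p\geq 2$, and it is exactly here that the standing assumption $p\geq 2$ of the corollary enters (for $1<p<2$ the estimate fails in this homogeneous form). I expect this to be the only non-mechanical step, and since it is a well-known inequality, it is the natural candidate to cite rather than reprove.

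Finally, having confirmed $(\alpha_1)$--$(\alpha_5)$, I would simply invoke Corollary \ref{main3}. The $(p-1)$-sublinearity at infinity is exactly hypothesis $(\textrm{h}_1)$, the function $f$ is continuous and nonnegative, and the growth bound on $F$ is assumed with $p<q<pN/(N-p)=p^*$; thus every hypothesis of Corollary \ref{main3} is met. Substituting $\Lambda_2=1/p$ and $k\equiv 1$ into the threshold of Corollary \ref{main3}, the leading constant collapses to
$$
\frac{2^p(2^N-1)\Lambda_2}{\tau^p\displaystyle\min_{x\in\bar\Omega}k(x)}=\frac{2^p(2^N-1)}{p\tau^p},
$$
and since $F(\delta)=\int_0^\delta f(t)\,dt$, the admissible range of the parameter becomes precisely $\lambda>\frac{2^p(2^N-1)}{p\tau^p}\inf_{\delta>0}\delta^p/\int_0^\delta f(t)\,dt$. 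Corollary \ref{main3} then yields at least two distinct nontrivial weak solutions in $W^{1,p}_0(\Omega)$, which is the assertion. The main obstacle, as noted, is confined to verifying $(\alpha_3)$ for $p\geq 2$; everything else is bookkeeping of the geometric constants.
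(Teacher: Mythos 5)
Your proposal is correct and follows exactly the route the paper intends: the paper presents Corollary \ref{main8} as "a special case of the above result" with no further argument, and your specialization $A(x,\xi)=|\xi|^p/p$, $k\equiv 1$, $\Lambda_1=\Lambda_2=1/p$ reproduces the threshold constant $\frac{2^p(2^N-1)}{p\tau^p}$ just as in the paper's Remark \ref{diriclet}. Your verification of $(\alpha_3)$ via Clarkson's inequality for $p\geq 2$ is the right (and only nontrivial) checking step, which the paper leaves implicit.
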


\begin{example}\label{inintroduction}\rm
Let $\Omega\subset \erre^5$ be a domain with smooth boundary. Taking $p=4$, an example of map that satisfies all the assumptions of Corollary \ref{main8} is the nonnegative function $g:\erre\rightarrow \erre$ given by $g(t):=\log(1+t^4)$. Indeed,
$$
\displaystyle\lim_{|t|\rightarrow \infty}\frac{\log(1+t^4)}{|t|^{3}}=0,
$$
\noindent and its potential
\begin{eqnarray*}
G(s)
& = & \sqrt{2}\arctan (\sqrt{2}s-1)+  \sqrt{2}\arctan (\sqrt{2}s+1)\nonumber\\
& - & \frac{\sqrt{2}}{2}\log\left(\frac{s^2-\sqrt{2}s+1}{s^2+\sqrt{2}s+1}\right)+ s\log(1+s^4)-4s,\nonumber
\end{eqnarray*}
\noindent satisfies $G(s)\leq s^2$, for every $s\in\erre$.\par
\noindent Then, for every
$$\displaystyle\lambda>\frac{124}{\tau^4}\inf_{\delta>0}\frac{\displaystyle \delta^4}{\displaystyle \int_0^\delta g(t)dt}\approx \frac{752}{\tau^4},$$
the following Dirichlet problem
\begin{equation} \tag{$\widehat{D}_\lambda^{g}$} \label{N452}
\left\{
\begin{array}{ll}
-\Delta_4u= \lambda g(u) & \rm in \quad \Omega
\\ u=0  & \rm on \,
\partial \Omega,\\
\end{array}
\right.
\end{equation}
\noindent has at least
two distinct, nontrivial weak solutions in $W^{1,4}_0(\Omega)$.
\end{example}

\indent In conclusion we present an application that is a direct consequence of Theorem \ref{Main1}.

\begin{example}\label{esempio}
\rm{ \noindent Let $(a^{ij})\in C^{0}(\bar{\Omega};\erre^{N\times N})$ be a positive definite matrix such that:
\begin{itemize}
\item[$({\beta}_1)$] $a^{ij}(x)=a^{ji}(x)$, \textit{for every} $x\in \bar \Omega$;

\item[$({\beta}_2)$] \textit{There are positive constants} $\Lambda_1$ \textit{and} $\Lambda_2$ \textit{for which}
$$
\Lambda_1|\xi|^2\leq \frac{1}{2}\sum_{i,j=1}^{N}a^{ij}(x)\xi_i\xi_j\leq \Lambda_2|\xi|^2,\quad\quad \forall x\in \bar{\Omega},\,\, \xi\in \R^N.
$$
\end{itemize}
\noindent Put
$$
A(x,\xi):=\frac{1}{2}\sum_{i,j=1}^{N}a^{i,j}(x)\xi_i\xi_j,\,\,\,\,\,\forall\; (x,\xi)\in \bar{\Omega}\times\erre^N,
$$
\noindent and consider the elliptic operator in divergent form
$$
L[u]=\sum_{i,j=1}^N\frac{\partial}{\partial x_j}\Big(a^{ij}(x)\frac{\partial u}{\partial x_i}\Big).
$$
As observed by De N\'apoli and Mariani in \cite{n1}, the map $A$ satisfies assumptions $(\alpha_1)$-$(\alpha_5)$. At this point, let $\Omega$ be a nonempty bounded open subset of the Euclidean space $(\erre^N,|\cdot|)$ with boundary of class $C^1$, and let $k:\bar\Omega\rightarrow \R$ be a positive continuous function. Further, let us fix $q\in ]2,2^*[$ and consider the function $h:\erre\rightarrow\erre$ defined by
\[
h(t):= \left\{
\begin{array}{ll}
\displaystyle 1+|t|^{q-1} & \mbox{ if\, $|t|\leq r$} \\\\
\displaystyle \frac{(1+r^{2})(1+r^{q-1})}{1+t^2} & \mbox{ if $|t|> r$},
\end{array}
\right.
\]
\noindent where $r$ is a fixed constant such that
\begin{eqnarray}\label{ineq}
r>\max\left\{\kappa,q^{\frac{1}{q-2}}(G_1+G_2)^{\frac{1}{q-2}}\right\}.
\end{eqnarray}
\noindent Clearly $h(0)\neq 0$ and $h(t)\leq (1+|t|^{q-1})$ for every $t\in\erre$. Hence, condition \eqref{S} is satisfied for $a_1=a_2=1$. Moreover, also condition $(\textrm{h}_1)$ is verified since $\displaystyle\lim_{|t|\rightarrow +\infty}h(t)/|t|=0$. Finally, owing to (\ref{ineq}), it follows that
$$
G_1+G_2<\frac{r^{q-2}}{q}.
$$
Therefore,
$$
\frac{H(r)}{r^2}=\frac{r^{q-2}}{q}+\frac{1}{r}>G_1+G_2,
$$
where $H(r):=\displaystyle\int_0^rh(t)dt$,
and condition $(\textrm{h}_2)$ holds choosing $\delta=r$.\par
\noindent Consequently, owing to Theorem \ref{Main1}, for each parameter $\lambda$ belonging to the interval
$$
\Lambda_{\delta}:=\frac{4(2^N-1)\Lambda_2}{\tau^2\displaystyle\min_{x\in\bar\Omega}k(x)}\left]\frac{\displaystyle r^2}{\displaystyle  H(r)},\frac{1}{G_1+G_2}\right[,
$$
the following elliptic Dirichlet problem
\begin{equation} \tag{${\widetilde{D}}_\lambda^{h}$} \label{N3}
\left\{
\begin{array}{ll}
-L[u]= \lambda k(x)h(u) & \rm in \quad \Omega
\\ u=0  & \rm on \,
\partial \Omega,\\
\end{array}
\right.
\end{equation}
\noindent possesses at least three nontrivial weak solutions in $H^1_0(\Omega)$.}
\end{example}

\begin{remark}\label{onAR1}\rm{
We observe that, to the contrary to the result of Krist\'aly,
Lisei and Varga studied \cite[Theorem 2.1]{k1}, in Theorem \ref{Main1}, as for instance Example \ref{esempio} shows, we don't require the following behaviour at zero
$$
\lim_{t\rightarrow 0}\frac{f(t)}{|t|^{p-1}}=0,
$$
that automatically implies $f(0)=0$.\par
\noindent Moreover, for completeness, we emphasize that our results can be investigated also for different classes of elliptic operators in divergence form looking at the existence of at least three nontrivial weak solutions. See, for instance, the recent and interesting paper of Colasuonno, Pucci, and Varga \cite{CPV} for related topics.
}
\end{remark}

\medskip
 \indent {\bf Acknowledgements.}
 The authors warmly thanks the anonymous referees for their useful and nice comments on the manuscript. The research was supported in part by the SRA grants P1-0292-0101 and J1-4144-0101. Moreover, the first author was supported by the GNAMPA Project 2012 titled: {\it Esistenza e molteplicit\`{a} di soluzioni per problemi differenziali non lineari.}

\end{document}